\theoremstyle{plain}
\newtheorem{thm}{Theorem}[section]
\newtheorem{lem}[thm]{Lemma}
\theoremstyle{definition}
\theoremstyle{remark}
\newcommand{\gl}{\mathrm{GL}}
\newcommand{\bbg}{\mathbb{G}}
\newcommand{\co}{\mathcal{O}}
\newcommand{\chh}{\mathcal{H}}
\newcommand{\kg}{\mathfrak{g}}
\newcommand{\kt}{\mathfrak{t}}
\newcommand{\bn}{\mathbf{N}}
\newcommand{\bz}{\mathbf{Z}}
\newcommand{\bq}{\mathbf{Q}}
\newcommand{\bc}{\mathbf{C}}
\newcommand{\lie}{\mathrm{Lie}}
\newcommand{\prim}{\mathrm{prim}}
\newcommand{\gen}{\mathrm{gen}}
\newcommand{\reg}{\mathrm{reg}}
\author{Zongbin \textsc{Chen}}
\address{EPFL SB Mathgeom/Geom, MA B1 447\\
Lausanne, CH-1015, Switzerland} 
\email{zongbin.chen@epfl.ch} 
\title{Geometric construction of generators of CoHA of doubled quiver}
\begin{document}
\maketitle

\begin{abstract}

Let $Q$ be the double of a quiver. According to Efimov, Kontsevich and Soibelman, the cohomological Hall algebra (CoHA) associated to $Q$ is a free super-commutative algebra. In this short note, we confirm a conjecture of Hausel which gives a geometric realisation of the generators of the CoHA.

\end{abstract}

\section{Introduction}

Let $Q=(I, \Omega)$ be a quiver with a set of vertices $I$ and with $a_{ij}$ arrows from $i\in I$ to $j\in I$. For each dimension vector $\gamma=(\gamma^{i})_{i\in I}\in \bz_{\geq 0}^{I}$, we have the affine $\bc$-variety $M_{\gamma}$ of representations of $Q$ in complex coordinate space $\bigoplus_{i\in I}\bc^{\gamma^{i}}$. It is acted on by the complex algebraic group $G_{\gamma}=\prod_{i\in I}\gl_{\gamma^{i}}(\bc)$ and the action factors through $PG_{\gamma}:=G_{\gamma}/\bbg_{m}$, where $\bbg_{m}$ is embedded diagonally in $G_{\gamma}$. We denote by $[M_{\gamma}/G_{\gamma}]$ the moduli stack of representations of $Q$ of dimension $\gamma$. 

For $\gamma_1,\,\gamma_{2}\in \bz_{\geq 0}^{I}$, let $M_{\gamma_{1},\gamma_{2}}$ be the subvariety of $ M_{\gamma_{1}+\gamma_{2}}$ consisting of the representations of $Q$ such that the subspace $\bigoplus_{i\in I}\bc^{\gamma_{1}^{i}}\subset \bigoplus_{i\in I}(\bc^{\gamma_{1}^{i}}\oplus \bc^{\gamma_{2}^{i}})$ forms a sub representation. Let $G_{\gamma_{1},\gamma_{2}}\subset G_{\gamma_{1}+\gamma_{2}}$ be the subgroup preserving $\bigoplus_{i\in I}\bc^{\gamma_{1}^{i}}$. Then $[M_{\gamma_{1},\gamma_{2}}/G_{\gamma_{1},\gamma_{2}}]$ is the moduli stack classifying the extensions of representations of $Q$ of dimension vector $\gamma_{2}$ by that of dimension vector $\gamma_{1}$. We have the correspondence
$$
\xymatrix{
& [M_{\gamma_{1},\gamma_{2}}/G_{\gamma_{1},\gamma_{2}}] \ar[dl]_{p}\ar[dr]^{q}&\\
[M_{\gamma_{1}}/G_{\gamma_{1}}] \times [M_{\gamma_{2}}/G_{\gamma_{2}}] && [M_{\gamma_{1}+\gamma_{2}}/G_{\gamma_{1}+\gamma_{2}}],
}
$$
from which Kontsevich and Soibelman \cite{ks} have constructed an associative algebra structure on 
$$
\chh:=\bigoplus_{\gamma\in \bz_{\geq 0}^{I}}\chh_{\gamma},\quad \chh_{\gamma}:=H^{*}([M_{\gamma}/G_{\gamma}])=H_{G_{\gamma}}^{*}(M_{\gamma}),
$$
which is called the \emph{cohomological Hall algebra} (CoHA) of the quiver $Q$. Here and after, all the cohomological groups take coefficients in $\bq$. The resulting product has a shift in cohomological degree:
$$
H_{G_{\gamma_{1}}}^{*}(M_{\gamma_{1}})\times H_{G_{\gamma_{2}}}^{*}(M_{\gamma_{2}})\to H_{G_{\gamma_{1}+\gamma_{2}}}^{*-2\chi_{Q}(\gamma_{1},\gamma_{2})}(M_{\gamma_{1}+\gamma_{2}}), 
$$
where 
$$
\chi_{Q}(\gamma_{1},\gamma_{2})=\sum_{i\in I}\gamma_{1}^{i}\gamma_{2}^{i}-\sum_{i,j\in I}a_{i,j}\gamma_{1}^{i}\gamma_{2}^{j}.
$$

Suppose that the quiver $Q$ is symmetric, i.e. $a_{ij}=a_{ji}$. In this case, $\chh$ has more structures. First of all, one can make $\chh$ into a $(\bz^{I}_{\geq 0},\,\bz)$-graded algebra, by requiring elements in $H_{G_{\gamma}}^{k}(M_{\gamma})$ to be of bidegree $(\gamma, \,k+\chi_{Q}(\gamma,\gamma))$. Secondly, following Efimov \cite{e}, we can twist the multiplication by a sign such that $(\chh, \,*)$ is a super-commutative algebra with respect to the $\bz$-grading. In fact, for $a_{\gamma,k}\in \chh_{\gamma,k},\,a_{\gamma',k'}\in \chh_{\gamma',k'}$, we have
$$
a_{\gamma,k}a_{\gamma',k'}=(-1)^{\chi_{Q}(\gamma,\gamma')}a_{\gamma',k'}a_{\gamma,k}.
$$
We can find a bilinear form $\psi:(\bz/2)^{I}\times (\bz/2)^{I}\to \bz/2$ such that
$$
\psi(\gamma_{1},\gamma_{2})+\psi(\gamma_{2},\gamma_{1})\equiv \chi_{Q}(\gamma_{1},\gamma_{2})+\chi_{Q}(\gamma_{1},\gamma_{1})\chi_{Q}(\gamma_{2},\gamma_{2})\mod 2.
$$
Then the twisted product on $\chh$ is defined to be
$$
a_{\gamma,k}*a_{\gamma',k'}=(-1)^{\psi(\bar{\gamma},\bar{\gamma}')}a_{\gamma,k}\cdot a_{\gamma',k'},
$$
where $\bar{\gamma}$ is the image of $\gamma$ in $(\bz/2)^{I}$.

For the symmetric quiver $Q$, it is conjectured by Kontsevich and Soibelman \cite{ks} and proved by Efimov \cite{e}, that the $(\bz^{I}_{\geq 0}, \bz)$-graded algebra $(\chh, \,*)$ is a free super-commutative algebra generated by a $(\bz^{I}_{\geq 0}, \bz)$-graded vector space $V$ of the form $V=V^{\prim}\otimes \bq[x]$, with $x$ an element of degree $(0,2)$, and for all $\gamma\in \bz^{I}_{\geq 0}$ the vector space $V_{\gamma,k}^{\prim}$ is non-zero for only finitely many $k$. Geometrically, via the isogeny $G_{\gamma}\to PG_{\gamma}\times\bbg_{m}$, we have
$$
\chh_{\gamma}=H^{*}_{G_{\gamma}}(M_{\gamma})\cong H^{*}_{PG_{\gamma}}(M_{\gamma})\otimes H^{*}_{\bbg_{m}}(\mathrm{pt}),
$$
and it gives the above factorisation $V=V^{\prim}\otimes \bq[x]$. Let $M_{\gamma}^{st}$ be the stable part of $M_{\gamma}$ with respect to the $0$-stability condition, i.e. $M_{\gamma}^{st}$ consists of all the simple $Q$-modules. Then as we will see in theorem \ref{stablegenerater}, $V_{\gamma,*+\chi_{Q}(\gamma,\gamma)}^{\prim}$ is in fact the pure part of $H^{*}([M_{\gamma}^{st}/PG_{\gamma}])$, where the word ``pure'' refers to the mixed Hodge structure on the cohomological group. Let $c_{\gamma,k}=\dim_{\bc}V_{\gamma,k}^{\prim}$, the above result implies that the quantum Donaldson-Thomas invariants of the quiver $Q$ without potential and with $0$-stability condition is
$$
\Omega(\gamma)(q)=\sum_{k\in \bz}c_{\gamma, k}q^{k/2}\in \bz[q^{\pm \frac{1}{2}}].
$$
In particular, the coefficients are positive. 

On the other hand, in the work of Hausel, Letellier and Rodriquez-Villegas \cite{positive}, they found another expression for the quantum Donaldson-Thomas invariants. From now on, we work with quivers $Q=(I,\Omega)$ that are the double of another quiver, i.e. $\Omega=\Omega_{0}\sqcup \Omega_{0}^{\mathrm{op}}$, where $\Omega_{0}^{\mathrm{op}}$ is obtained by reversing all the arrows in $\Omega_{0}$. In this case, $M_{\gamma}$ is endowed with a $G_{\gamma}$-invariant holomorphic symplectic form $\omega$. Let $\mu:M_{\gamma}\to \kg_{\gamma}^{0}$ be the corresponding moment map, here $\kg_{\gamma}^{0}$ is the trace $0$ part of $\kg_{\gamma}:=\lie(G_{\gamma})$. Let $\co$ be the $G_{\gamma}$-orbit of a \emph{generic} (to be explained below) regular semisimple element in $\kg_{\gamma}^{0}$. The group $PG_{\gamma}$ acts freely on $\mu^{-1}(\co)$ and we have the geometric quotient $\mu^{-1}(\co)/PG_{\gamma}$, which is a smooth quasi-projective algebraic variety. Furthermore, the Weyl group $W_{\gamma}$ of $G_{\gamma}$ acts on the cohomological groups $H^{*}(\mu^{-1}(\co)/PG_{\gamma})$. One of the main results of \cite{positive} states that
$$
\Omega(\gamma)(q)=q^{\frac{1}{2}\chi_{Q}(\gamma,\gamma)}\sum_{i}\dim\big(H^{2i}(\mu^{-1}(\co)/PG_{\gamma})^{W_{\gamma}}\big)\, q^{i}.
$$

Based on this result, Hausel conjectured that the cohomological groups 
$$
H^{k}(\mu^{-1}(\co)/PG_{\gamma})^{W_{\gamma}}
$$ 
are geometric realisations of the generating set $V_{\gamma, k+\chi_{Q}(\gamma,\gamma)}^{\prim}$. (Of course, the conjecture is meaningful only when $\Omega(\gamma)(q)$ is non-zero. We will always impose this condition). In this article, we confirm this conjecture. Our construction goes as follows:

Let $\chi:\kg_{\gamma}^{0}\to \kg_{\gamma}^{0}\sslash G_{\gamma}\cong\kt_{\gamma}^{0}\sslash W_{\gamma}$ be the characteristic morphism. Following Ginzburg \cite{ginzburg}, we consider the composition $f:M_{\gamma}\xrightarrow{\mu} \kg_{\gamma}^{0}\xrightarrow{\chi} \kt_{\gamma}^{0}\sslash W_{\gamma}$. Let
$$\kt_{\gamma}^{0,\gen}:=\kt_{\gamma}^{0,\reg}\backslash \bigcup_{\substack{\gamma_{1},\gamma_{2}\in \bz^{I}_{\geq 0},\\ \gamma_{1}+\gamma_{2}=\gamma}}W_{\gamma}\cdot(\kt_{\gamma_{1}}^{0}\oplus\kt_{\gamma_{2}}^{0}),
$$ 
we call conjugates of elements in it \emph{generic} regular semisimple elements. Let 
$$
U_{\gamma}:=f^{-1}(\kt^{0, \gen}_{\gamma}/W_{\gamma}),
$$ 
then the group $PG_{\gamma}$ acts freely on $U_{\gamma}$ and the quotient $U_{\gamma}/PG_{\gamma}$ is a quasi-projective algebraic variety. Furthermore, the restriction of the morphism $f$ to $U_{\gamma}$ descends to a morphism $\bar{f}: U_{\gamma}/PG_{\gamma}\to \kt^{0,\gen}_{\gamma}/W_{\gamma}$. We'll prove that it makes $U_{\gamma}/PG_{\gamma}$ a fiber bundle on $\kt^{0,\gen}_{\gamma}/W_{\gamma}$ with fibers isomorphic to $\mu^{-1}(\co)/PG_{\gamma}$. Our main result is the following:

\begin{thm}\label{main}

The pure part of $H^{*}(U_{\gamma}/PG_{\gamma})$ is equal to $H^{*}(\mu^{-1}(\co)/PG_{\gamma})^{W_{\gamma}}$. The restriction $H_{PG_{\gamma}}^{*}(M_{\gamma})\to H^{*}(U_{\gamma}/PG_{\gamma})$ factors through and is surjective onto the pure part of the latter, and its restriction to $V_{\gamma,*+\chi_{Q}(\gamma,\gamma)}^{\prim}$ induces an isomorphism
$$
V_{\gamma,*+\chi_{Q}(\gamma,\gamma)}^{\prim}\cong H^{*}(\mu^{-1}(\co)/PG_{\gamma})^{W_{\gamma}}.
$$

\end{thm}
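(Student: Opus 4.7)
\emph{Proof sketch.} The plan is to combine the fiber-bundle structure of $\bar{f}:U_\gamma/PG_\gamma\to \kt_\gamma^{0,\gen}/W_\gamma$ with weight arguments in the Leray spectral sequence, and then to match the resulting pure part with $V^{\prim}$ via Theorem \ref{stablegenerater}.

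First I want to compute the pure part of $H^*(U_\gamma/PG_\gamma)$. Pulling back $\bar{f}$ along the Galois cover $\kt_\gamma^{0,\gen}\to \kt_\gamma^{0,\gen}/W_\gamma$ gives a fibration over $\kt_\gamma^{0,\gen}$ whose fiber over $t$ is canonically $\mu^{-1}(t)/(T_\gamma/\bbg_m)$, using that the centralizer of a regular semisimple $t\in\kt_\gamma^{0}$ is the fixed maximal torus. As $t$ varies in the connected open set $\kt_\gamma^{0,\gen}$, these identifications assemble into a smooth family so that the monodromy of $R^q\bar{f}_{*}\bq$ on $\kt_\gamma^{0,\gen}/W_\gamma$ factors through $W_\gamma$. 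Now $\kt_\gamma^{0,\gen}$ is the complement of a union of rational hyperplanes in the affine space $\kt_\gamma^{0}$, so by Deligne $H^p(\kt_\gamma^{0,\gen})$ is pure of weight $2p$, and $\mu^{-1}(\co)/PG_\gamma$ is a generic quiver variety whose rational cohomology is pure (a known input from \cite{positive}). In the Leray spectral sequence for $\bar{f}$, the only contribution to the weight-$n$ part of $H^n(U_\gamma/PG_\gamma)$ therefore comes from $E_{2}^{0,n}=H^0(\kt_\gamma^{0,\gen}/W_\gamma,R^n\bar{f}_{*}\bq)$, yielding
\[
H^n(U_\gamma/PG_\gamma)^{\mathrm{pure}}\;\cong\; H^n(\mu^{-1}(\co)/PG_\gamma)^{W_\gamma},
\]
which is the first assertion of the theorem.

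Next I compare with $V^{\prim}$. Any $x\in U_\gamma$ is a simple representation (a nontrivial subrepresentation would force $\mu(x)$ into the excluded locus $W_\gamma\cdot(\kt_{\gamma_1}^{0}\oplus\kt_{\gamma_2}^{0})$), so $U_\gamma\subset M_\gamma^{st}$. Restriction in equivariant cohomology produces morphisms of mixed Hodge structures $H^{*}_{PG_\gamma}(M_\gamma)\to H^{*}(M_\gamma^{st}/PG_\gamma)\to H^{*}(U_\gamma/PG_\gamma)$, and, passing to weight-$*$ parts, a map
\[
V^{\prim}_{\gamma,*+\chi_{Q}(\gamma,\gamma)}\;\subset\; H^{*}(M_\gamma^{st}/PG_\gamma)^{\mathrm{pure}}\;\longrightarrow\; H^{*}(U_\gamma/PG_\gamma)^{\mathrm{pure}}\;=\; H^{*}(\mu^{-1}(\co)/PG_\gamma)^{W_\gamma},
\]
where the first identification is Theorem \ref{stablegenerater}. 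By the HLV formula for $\Omega(\gamma)(q)$ recalled above and the definition of $c_{\gamma,k}=\dim V^{\prim}_{\gamma,k}$, the two graded vector spaces at the ends have the same dimension in each degree. It thus suffices to prove surjectivity of the composite.

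Surjectivity will be obtained in two stages. For the lift from $H^{*}(U_\gamma/PG_\gamma)^{\mathrm{pure}}$ to $H^{*}(M_\gamma^{st}/PG_\gamma)^{\mathrm{pure}}$ I examine the Gysin/excision sequence for the open inclusion $U_\gamma\subset M_\gamma^{st}$: the complement is the preimage under $f|_{M_\gamma^{st}}$ of the non-generic locus in $\kt_\gamma^{0}/W_\gamma$, stratified by the walls $W_\gamma\cdot(\kt_{\gamma_1}^{0}\oplus\kt_{\gamma_2}^{0})$ associated to decompositions $\gamma=\gamma_1+\gamma_2$, and a weight estimate on each stratum shows the Gysin terms carry weights strictly greater than the abutment, hence vanish after passing to pure parts. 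The further lift from $H^{*}(M_\gamma^{st}/PG_\gamma)^{\mathrm{pure}}$ to $H^{*}_{PG_\gamma}(M_\gamma)$ is immediate from Theorem \ref{stablegenerater}. The main obstacle is the weight estimate on the stratification of $M_\gamma^{st}\setminus U_\gamma$: one needs uniform control of codimensions and of the weights of normal bundles across all strata indexed by ordered decompositions of $\gamma$, so that the Gysin contributions from every wall are pushed into weight $>n$ in degree $n$ and therefore do not obstruct the lift of any pure class.
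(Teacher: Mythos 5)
Your first half (the computation of the pure part of $H^{*}(U_{\gamma}/PG_{\gamma})$ via the fiber bundle $\bar{f}$, the finite monodromy through $W_{\gamma}$, the weight $2p$ of hyperplane-complement cohomology and the purity of the fiber $\mu^{-1}(\co)/PG_{\gamma}$) is essentially the paper's argument for theorem \ref{pure part} and is fine, modulo the standard fact that the relevant spectral sequence is compatible with mixed Hodge structures.

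The second half has a genuine gap, in two places. First, your appeal to theorem \ref{stablegenerater} is circular as the logic stands: in the paper that statement is \emph{deduced from} theorem \ref{main}, and only its surjectivity half ($H^{*}_{PG_{\gamma}}(M_{\gamma})\twoheadrightarrow \mathrm{P}H^{*}([M_{\gamma}^{st}/PG_{\gamma}])$, with image equal to the image of $V^{\prim}$) can be obtained beforehand; the identification $V^{\prim}_{\gamma,*+\chi_{Q}(\gamma,\gamma)}\cong \mathrm{P}H^{*}([M_{\gamma}^{st}/PG_{\gamma}])$ is not available until the main theorem is proved. What you actually need, and do not invoke, is lemma \ref{vanishing}: decomposable classes of $\chh$ restrict to zero on the stable locus, hence the image of $H^{*}_{PG_{\gamma}}(M_{\gamma})$ in $H^{*}(U_{\gamma}/PG_{\gamma})$ coincides with the image of $V^{\prim}$ (the $\bq[x]$ factor being killed by passing from $G_{\gamma}$ to $PG_{\gamma}$). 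Second, and more seriously, the surjectivity onto the pure part of $H^{*}(U_{\gamma}/PG_{\gamma})$ is exactly the step you leave unproved: the stratified Gysin argument you sketch for the open inclusion $U_{\gamma}\subset M_{\gamma}^{st}$ requires normal-bundle and codimension control on the walls, but the complement is singular and no clean Gysin sequence with such data exists; you correctly flag this as ``the main obstacle'' and then do not resolve it, so the proof is incomplete at its crucial point.

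The paper's resolution needs none of that stratification. One works with the closed complement $Z_{\gamma}=M_{\gamma}\backslash U_{\gamma}$ inside all of $M_{\gamma}$ (not inside $M_{\gamma}^{st}$) and with \emph{compactly supported} cohomology of the quotient stacks: by Deligne's Weil II bound (transported to $[Z_{\gamma}/PG_{\gamma}]$ via finite-dimensional approximations), $H^{i}_{c}([Z_{\gamma}/PG_{\gamma}])$ has weights $\leq i$ for an arbitrary, possibly very singular, $Z_{\gamma}$. The long exact sequence (\ref{standard 1}) then shows that the pure part of $H^{i}_{c}([U_{\gamma}/PG_{\gamma}])$ injects into $H^{i}_{c}([M_{\gamma}/PG_{\gamma}])$, and Poincar\'e duality for the smooth stack $[M_{\gamma}/PG_{\gamma}]$ and the smooth variety $U_{\gamma}/PG_{\gamma}$, together with the purity of $H^{*}([M_{\gamma}/PG_{\gamma}])=H^{*}_{PG_{\gamma}}(\mathrm{pt})$, converts this into the desired surjection of $H^{*}([M_{\gamma}/PG_{\gamma}])$ onto the pure part of $H^{*}(U_{\gamma}/PG_{\gamma})$. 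Combining this with lemma \ref{vanishing} and your (correct) dimension count from the HLV formula closes the argument; if you repair your sketch along these lines it becomes the paper's proof.
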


\section{Proof of the main theorem}

We begin by recalling the construction of Kontsevich and Soibelman of the cohomological Hall algebra. Given two vectors $\gamma_{1}, \,\gamma_{2}\in \bz_{\geq 0}^{I}$, let $\gamma=\gamma_{1}+\gamma_{2}$. The product $\chh_{\gamma_{1}}\times \chh_{\gamma_{2}}\to \chh_{\gamma}$ is defined to be the composition of the K\"unneth isomorphism
$$
H_{G_{\gamma_{1}}}^{*}(M_{\gamma_{1}})\otimes H_{G_{\gamma_{2}}}^{*}(M_{\gamma_{2}})\cong H^{*}_{G_{\gamma_{1}}\times G_{\gamma_{2}}}(M_{\gamma_{1}}\times M_{\gamma_{2}}),
$$
and the following morphisms
\begin{equation}\label{defcoha}
H^{*}_{G_{\gamma_{1}}\times G_{\gamma_{2}}}(M_{\gamma_{1}}\times M_{\gamma_{2}})\cong 
H^{*}_{G_{\gamma_{1},\gamma_{2}}}(M_{\gamma_{1},\gamma_{2}})
\xrightarrow{\phi_{1}} H^{*+2c_{1}}_{G_{\gamma_{1},\gamma_{2}}}(M_{\gamma})
\xrightarrow{\phi_{2}} H^{*+2c_{1}+2c_{2}}_{G_{\gamma}}(M_{\gamma}),
\end{equation}
where $c_{1}=\dim_{\bc}M_{\gamma}-\dim_{\bc}M_{\gamma_{1},\gamma_{2}}$ and $c_{2}=\dim_{\bc}G_{\gamma_{1}, \gamma_{2}}-\dim_{\bc}G_{\gamma}$, and the first isomorphism is induced by the fibrations in affine spaces 
$$
M_{\gamma_{1},\gamma_{2}}\to M_{\gamma_{1}}\times M_{\gamma_{2}},\quad  G_{\gamma_{1},\gamma_{2}}\to G_{\gamma_{1}}\times G_{\gamma_{2}},
$$
and the other morphisms $\phi_{1},\phi_{2}$ are natural push forwards.

\begin{lem}\label{vanishing}

Under the restriction $H^{*}([M_{\gamma}/G_{\gamma}])\to H^{*}([M_{\gamma}^{st}/G_{\gamma}])$, the image of 
$$\bigoplus_{\substack{\gamma_{1},\gamma_{2}\in \bz_{\geq 0}^{I}\\ \gamma_{1}+\gamma_{2}=\gamma}} \chh_{\gamma_{1}}\times \chh_{\gamma_{2}}
$$ 
in $\chh_{\gamma}=H^{*}([M_{\gamma}/G_{\gamma}])$ goes to $0$.

\end{lem}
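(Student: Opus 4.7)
My plan is to realise the Hall product geometrically as a proper pushforward whose set-theoretic image misses the stable locus. Throughout, I interpret the direct sum in the statement as ranging over \emph{nontrivial} decompositions $\gamma=\gamma_{1}+\gamma_{2}$ with $\gamma_{1},\gamma_{2}\neq 0$; this is the implicit convention, since for example $\gamma_{1}=0$ forces $[M_{\gamma_{1},\gamma_{2}}/G_{\gamma_{1},\gamma_{2}}]=[M_{\gamma}/G_{\gamma}]$ and the multiplication map becomes the identity on $\chh_{\gamma}$.

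First I would identify the composition $\phi_{2}\circ\phi_{1}$ in \eqref{defcoha} with the pushforward along the correspondence map $q\colon [M_{\gamma_{1},\gamma_{2}}/G_{\gamma_{1},\gamma_{2}}]\to [M_{\gamma}/G_{\gamma}]$. Indeed, $\phi_{1}$ is the Gysin pushforward along the $G_{\gamma_{1},\gamma_{2}}$-equivariant closed embedding $M_{\gamma_{1},\gamma_{2}}\hookrightarrow M_{\gamma}$, while $\phi_{2}$ is the pushforward along $[M_{\gamma}/G_{\gamma_{1},\gamma_{2}}]\to [M_{\gamma}/G_{\gamma}]$, which is proper since its fibre is the partial flag variety $G_{\gamma}/G_{\gamma_{1},\gamma_{2}}$. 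Hence $q$ is proper and the Hall product factors through $q_{*}$.

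Next I would describe the image of $q$. Since the source stack parametrises pairs (representation $m$ of dimension $\gamma$, subrepresentation of $m$ of dimension $\gamma_{1}$), the set-theoretic image of $q$ is the $G_{\gamma}$-invariant subvariety
$$
Z_{\gamma_{1},\gamma_{2}}:=\{m\in M_{\gamma}\mid m\text{ admits a subrepresentation of dimension }\gamma_{1}\}\subset M_{\gamma},
$$
which is closed either because $q$ is proper or because it is cut out by the determinantal conditions expressing the existence of a stable sub-bundle. With $\gamma_{1},\gamma_{2}$ both nonzero, any point of $Z_{\gamma_{1},\gamma_{2}}$ is a representation admitting a proper nonzero subrepresentation and therefore is not simple, so $M_{\gamma}^{st}\cap Z_{\gamma_{1},\gamma_{2}}=\emptyset$, i.e.\ $M_{\gamma}^{st}\subset M_{\gamma}\setminus Z_{\gamma_{1},\gamma_{2}}$.

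Finally, by proper base change along the open embedding $j\colon [(M_{\gamma}\setminus Z_{\gamma_{1},\gamma_{2}})/G_{\gamma}]\hookrightarrow [M_{\gamma}/G_{\gamma}]$ (equivalently, by the excision long exact sequence for the closed-open pair with support on $Z_{\gamma_{1},\gamma_{2}}$), the pullback $j^{*}q_{*}\alpha$ is the pushforward of $\alpha$ along the base change of $q$, which has empty domain. Hence $q_{*}\alpha$ vanishes on $M_{\gamma}\setminus Z_{\gamma_{1},\gamma_{2}}$ and a fortiori on the smaller open substack $[M_{\gamma}^{st}/G_{\gamma}]$, which is the desired conclusion. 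I do not foresee a genuine obstacle; the only mild technical point is justifying proper base change in the equivariant/stacky setting, which is a routine application of excision.
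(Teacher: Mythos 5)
Your argument is correct and essentially coincides with the paper's proof: the paper factors $\phi_{1}$ through cohomology with supports in $[M_{\gamma_{1},\gamma_{2}}/G_{\gamma_{1},\gamma_{2}}]$ and uses the long exact sequence of the pair to see that its image dies on the open complement, in particular on $[M_{\gamma}^{st}/G_{\gamma_{1},\gamma_{2}}]$ (stable $=$ simple), and then applies the proper pushforward $\phi_{2}$ --- which is exactly your proper-base-change argument for $q_{*}$, just carried out at the intermediate $G_{\gamma_{1},\gamma_{2}}$-level instead of after composing into a single proper map. Your explicit restriction to nontrivial decompositions $\gamma_{1},\gamma_{2}\neq 0$ is indeed the convention left implicit in the paper.
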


\begin{proof}

By the definition of Gysin map, the morphism $\phi_{1}$ in the composition (\ref{defcoha}) factorises as
\[
H^{*}([M_{\gamma_{1},\gamma_{2}}/G_{\gamma_{1},\gamma_{2}}])
\to H^{*+2c_{1}}_{[M_{\gamma_{1},\gamma_{2}}/G_{\gamma_{1},\gamma_{2}}]}([M_{\gamma}/G_{\gamma_{1},\gamma_{2}}]) \to  H^{*+2c_{1}}([M_{\gamma}/G_{\gamma_{1},\gamma_{2}}]).
\]
Using the long exact sequence
$$
\cdots \to H^{*}_{[M_{\gamma_{1},\gamma_{2}}/G_{\gamma_{1},\gamma_{2}}]}([M_{\gamma}/G_{\gamma_{1},\gamma_{2}}])\to H^{*}([M_{\gamma}/G_{\gamma_{1},\gamma_{2}}])\to H^{*}([M_{\gamma}/G_{\gamma_{1},\gamma_{2}}]\backslash [M_{\gamma_{1},\gamma_{2}}/G_{\gamma_{1},\gamma_{2}}])\to \cdots,
$$
we see that $\mathrm{Im}(\phi_{1})$ goes to $0$ when we restrict it to 
$$
H^{*+2c_{1}}([M_{\gamma}/G_{\gamma_{1},\gamma_{2}}]\backslash [M_{\gamma_{1},\gamma_{2}}/G_{\gamma_{1},\gamma_{2}}]).
$$
Since $[M_{\gamma}^{st}/G_{\gamma_{1},\gamma_{2}}]$ is contained in $[M_{\gamma}/G_{\gamma_{1},\gamma_{2}}]\backslash [M_{\gamma_{1},\gamma_{2}}/G_{\gamma_{1},\gamma_{2}}]$, $\mathrm{Im}(\phi_{1})$ vanishes when we restrict it further to  
$$
H^{*+2c_{1}}([M_{\gamma}^{st}/G_{\gamma_{1},\gamma_{2}}]).
$$
Now applying $\phi_{2}$, we see that $\mathrm{Im}(\phi_{2}\circ \phi_{1})$ vanishes when we restrict it to $H^{*+2c_{1}+2c_{2}}([M_{\gamma}^{st}/G_{\gamma}])$.

\end{proof}

We need some preliminary results before proceeding to the proof of the main theorem. Given an element $t=(t_{i})_{i\in I}\in \kt_{\gamma}^{0,\gen}$, let $\co$ be its orbit under the action of $PG_{\gamma}$ by conjugation. Recall that Crawley-Boevey \cite{cb} has identified the geometric quotient $\mu^{-1}(\co)/PG_{\gamma}$ with a quiver variety: Let $\widetilde{Q}$ be the quiver obtained from $Q$ by attaching to each vertex $i\in I$ a leg of length $\gamma_{i}-1$. More precisely, vertices of $\widetilde{Q}$ are labeled $[i,j],i\in I, j=0,\cdots, \gamma_{i}-1$, and we identify $[i, 0]$ with $i$. Besides the arrows in $Q$, the new arrows in $\widetilde{Q}$ are $[i,j]\rightleftharpoons[i,j+1]$ for each $i\in I, j=0,\cdots, \gamma_{i}-2$. The new dimension vector of $\widetilde{Q}$ is defined to be $\widetilde{\gamma}_{[i,j]}=\gamma_{i}-j$. Again, we have the moment map $\widetilde{\mu}: M^{\widetilde{Q}}_{\widetilde{\gamma}}\to \kg_{\widetilde{\gamma}}^{0}$, where $M^{\widetilde{Q}}_{\widetilde{\gamma}}$ is the space of representations of $\widetilde{Q}$ of dimension vector $\widetilde{\gamma}$. For each $i\in I$, let $t_{i,1},\cdots, t_{i, \gamma_{i}}$ be the eigenvalues of $t_{i}$. Define $\lambda=(\lambda_{[i,j]})\in \kg_{\widetilde{\gamma}}^{0}$ to be 
\begin{eqnarray*}
\lambda_{[i,0]}&=&-t_{i,1},\\
\lambda_{[i,j]}&=& t_{i,j}-t_{i,j+1},\quad j=1,\cdots, \gamma_{i}-1.
\end{eqnarray*}
Notice that $\widetilde{\gamma}\cdot \lambda=0$. Now the result of Crawley-Boevey \cite{cb} states that 
\begin{equation}\label{cbiso}
\mu^{-1}(\co)/PG_{\gamma}\cong \widetilde{\mu}^{-1}(\lambda)/PG_{\widetilde{\gamma}}.
\end{equation}
Moreover, according to \cite{positive}, corollary 1.6 (iv), $\Omega(\gamma)(q)$ is non-zero if and only if $\widetilde{\gamma}$ is a positive root of $Q'$, here we write $\widetilde{Q}$ as the double of another quiver $Q'$.

\begin{lem}\label{fiberbundle}

The morphism $\bar{f}: U_{\gamma}/PG_{\gamma}\to \kt_{\gamma}^{0,\gen}/W_{\gamma}$ makes $U_{\gamma}/PG_{\gamma}$ a fiber bundle over $\kt_{\gamma}^{0,\gen}/W_{\gamma}$. Moreover, the sheaf $R^{i}\bar{f}_{*}\bq$ is constant on the étale neighbourhood $\kt^{0,\gen}_{\gamma}\to \kt^{0,\gen}_{\gamma}/W_{\gamma}$ of $\kt^{0,\gen}_{\gamma}/W_{\gamma}$.

\end{lem}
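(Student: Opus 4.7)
The plan is to trivialize the $W_\gamma$-symmetry by passing to the étale cover $\kt^{0,\gen}_\gamma\to\kt^{0,\gen}_\gamma/W_\gamma$, describe the pulled-back family as a quotient by the maximal torus of a moment-map preimage, and then analyze its cohomology using symmetries of the situation.

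First I would construct an explicit model for the pullback of $\bar{f}$. Set $M^T_\gamma := \mu^{-1}(\kt^{0,\gen}_\gamma)$, on which $T_\gamma\subset G_\gamma$ acts through its inclusion (since $T_\gamma$ fixes $\kt^{0,\gen}_\gamma$ pointwise under the adjoint action). The natural morphism
\[
G_\gamma \times^{T_\gamma} M^T_\gamma \to U_\gamma,\qquad [g,x]\mapsto g\cdot x,
\]
is a Galois étale $W_\gamma$-cover: for each $y\in U_\gamma$, the $|W_\gamma|$ preimages correspond precisely to the $|W_\gamma|$ choices in $N_{G_\gamma}(T_\gamma)/T_\gamma=W_\gamma$ of a diagonalization of the regular semisimple element $\mu(y)$. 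Passing to $G_\gamma$-quotients (equivalent to $PG_\gamma$-quotients, since $\bbg_m$ acts trivially on $M_\gamma$) yields an étale $W_\gamma$-cover $M^T_\gamma/T_\gamma\to U_\gamma/PG_\gamma$, which identifies the pullback of $\bar{f}$ along $\kt^{0,\gen}_\gamma\to\kt^{0,\gen}_\gamma/W_\gamma$ with the restricted moment map $\bar\mu: M^T_\gamma/T_\gamma\to\kt^{0,\gen}_\gamma$. The fiber of $\bar\mu$ over $t$ is $\mu^{-1}(t)/T_\gamma$, which by (\ref{cbiso}) is the Nakajima quiver variety attached to $(\widetilde Q,\widetilde\gamma,\lambda(t))$.

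Next I would show that $\bar\mu$ is a locally trivial fibration; since the fiber-bundle property descends along étale covers, the same then holds for $\bar{f}$. As $PG_\gamma$ acts freely on $U_\gamma$, the infinitesimal stabilizer of any $x\in M^T_\gamma$ in $\kg_\gamma$ reduces to the center. The standard Hamiltonian-reduction argument then gives that $d\mu_x:T_xM_\gamma\to\kg^0_\gamma$ is surjective, so its restriction to the subspace $d\mu_x^{-1}(\kt^0_\gamma)=T_xM^T_\gamma$ surjects onto $\kt^0_\gamma$. Hence $\mu|_{M^T_\gamma}$ is a smooth submersion, and so is $\bar\mu$ after the free $T_\gamma/\bbg_m$-quotient. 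Local analytic triviality then follows from the holomorphic slice theorem for symplectic reduction at regular values, where the local model is a product of the fiber with an analytic neighborhood of $t$ in $\kt^{0,\gen}_\gamma$.

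Finally, and most substantively, I would show that $R^i\bar\mu_*\bq$ is a constant sheaf on $\kt^{0,\gen}_\gamma$. The previous step already gives that it is a local system, so it remains to trivialize the monodromy $\pi_1(\kt^{0,\gen}_\gamma)\to \gl(H^i(\mu^{-1}(t_0)/T_\gamma))$. I would exploit the $\bbg_m$-scaling action $s\cdot(x,t)=(sx,s^2 t)$, which intertwines $\bar\mu$ with the weight-two scaling on the base and makes $R^i\bar\mu_*\bq$ into a $\bbg_m$-equivariant local system, thereby descending it to $\kt^{0,\gen}_\gamma/\bbg_m$ and reducing the monodromy problem to the projective base. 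For the residual monodromy, coming from loops around the walls $W_\gamma\cdot(\kt^0_{\gamma_1}\oplus\kt^0_{\gamma_2})$, I would invoke the fact, going back to Nakajima, that over a connected open set of generic parameters the family of symplectic reductions forms a smooth bundle of diffeomorphic varieties with canonically identified cohomologies (via hyperkähler rotation). This is the genuinely delicate point: making the topological trivialization account for the algebraic local system requires either importing the hyperkähler argument or producing an independent algebraic trivialization of the family over $\kt^{0,\gen}_\gamma$.
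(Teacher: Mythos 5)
Your reduction to the restricted moment map $\bar\mu\colon M^T_\gamma/T_\gamma\to\kt^{0,\gen}_\gamma$ via the Galois $W_\gamma$-cover is the right underlying structure, and it is essentially what lies behind Maffei's Lemma~48, which the paper cites directly for both the fiber-bundle statement and the constancy of $R^i\bar f_*\bq$. But the one genuinely nontrivial verification the paper performs is \emph{surjectivity} of the moment map onto the generic locus (equivalently, nonemptiness of every fiber $\mu^{-1}(t)$), which is supplied by Crawley--Boevey's criterion through the isomorphism $\mu^{-1}(\co)/PG_\gamma\cong\widetilde\mu^{-1}(\lambda)/PG_{\widetilde\gamma}$ together with the fact that $\widetilde\gamma$ is a positive root of $Q'$. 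Your sketch omits this entirely; without it Maffei's argument has nothing to run on, and your submersion argument only controls $d\mu$ where the fiber is already known to be nonempty.

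Two further steps do not hold up as written. First, a holomorphic submersion with \emph{noncompact} fibers is not automatically a topological fiber bundle: the holomorphic slice/Marsden--Weinstein normal form is a statement near each point of a given level set, and the slice size may degenerate at infinity along the fiber, while Ehresmann's theorem requires properness. The trivialization in Maffei's proof comes from the complete hyperk\"ahler metric on the family (parallel transport for the hyperk\"ahler moment map), not from a slice theorem. Second, the $\bbg_m$-scaling detour does not trivialize the monodromy: $\kt^{0,\gen}_\gamma/\bbg_m$ is still the complement of a projective hyperplane arrangement and generally has large $\pi_1$. The actual reason $R^i\bar\mu_*\bq$ is constant---and the content your closing appeal to ``canonically identified cohomologies via hyperk\"ahler rotation'' is silently borrowing---is that after hyperk\"ahler rotation the family extends over the hyperk\"ahler-generic locus in $\kt^0_\gamma\otimes\br^3$, which is the complement of real-codimension $\geq 3$ subspaces and hence \emph{simply connected}; the monodromy over $\kt^{0,\gen}_\gamma$ is then trivial by restriction. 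This is precisely what Maffei proves; it is the lemma's content, not a freely citable input, and a $\bbg_m$-descent cannot substitute for it.
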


\begin{proof}

This is basically the lemma 48 of \cite{maffei}, with one difference. As in the proof of \cite{positive} theorem 2.3, Maffei works with quiver without loops, but his proof carries over in our case. In his proof, the important point is the surjectivity of $\bar{f}$ (or rather the hyperk\"ahler moment map on the generic locus, but this can be reduced to $\bar{f}$ by hyperk\"ahler rotation). According to \cite{cb1}, theorem 4.4, this is fulfilled in our situation since $
\mu^{-1}(\co)/PG_{\gamma}\cong \widetilde{\mu}^{-1}(\lambda)/PG_{\widetilde{\gamma}}
$ and $\widetilde{\gamma}\cdot \lambda =0$, taking into account that $\widetilde{\gamma}$ is a positive root of $Q'$.

\end{proof}

\begin{lem}[Crawley-Boevey-van den Bergh]\label{pure}
The smooth quasi-projective algebraic variety $\mu^{-1}(\co)/PG_{\gamma}$ has pure mixed Hodge structure.

\end{lem}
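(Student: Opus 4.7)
The plan is to reduce, via the Crawley-Boevey isomorphism (\ref{cbiso}), to proving purity of $Y:=\widetilde{\mu}^{-1}(\lambda)/PG_{\widetilde{\gamma}}$, and then to deduce purity from the existence of a projective contraction of $Y$ onto an affine cone that carries a contracting $\bbg_{m}$-action.

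After this reduction, $Y$ is a smooth quiver variety of Nakajima type: the freeness of the $PG_{\widetilde{\gamma}}$-action and the smoothness of $Y$ follow from the hypothesis that $\widetilde{\gamma}$ is a positive root of $Q'$ and $\widetilde{\gamma}\cdot \lambda=0$, via Crawley-Boevey's theorem. The affinization morphism $\pi:Y\to Y_{0}$, with $Y_{0}:=\widetilde{\mu}^{-1}(0)\sslash PG_{\widetilde{\gamma}}$, is projective. Moreover, the scaling action $s\cdot (x_{e})=(s\, x_{e})$ on the representation space $M^{\widetilde{Q}}_{\widetilde{\gamma}}$ scales the moment map by $s^{2}$, hence descends to a contracting $\bbg_{m}$-action on $Y_{0}$ with a unique fixed point at the class of the zero representation.

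The main step is then an application of the decomposition theorem: one writes $R\pi_{*}\bq_{Y}[\dim Y]$ as a direct sum of shifts of simple perverse sheaves on $Y_{0}$, each of which is pure. The contracting $\bbg_{m}$-action allows the computation of $H^{*}(Y_{0},-)$ on any $\bbg_{m}$-equivariant constructible sheaf to be reduced to a stalk computation at the fixed point, whose mixed Hodge structure is pure by the general properties of intersection cohomology sheaves (purity of IC sheaves as mixed Hodge modules). Summing over summands yields purity of $H^{*}(Y)=H^{*}(Y_{0},R\pi_{*}\bq_{Y})$.

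The main obstacle is ensuring that the scaling $\bbg_{m}$-action is compatible with $\pi$ and that the deformation connecting $Y$ to $Y_{0}$ is set up correctly — in particular the naive scaling does not preserve the fiber $\widetilde{\mu}^{-1}(\lambda)$, so one must either combine it with a rescaling of $\lambda$ in an equivariant one-parameter family of moment fibers, or invoke a hyperk\"ahler rotation to pass to a more convenient complex structure. Verifying projectivity of $\pi$ in the presence of the leg construction on $\widetilde{Q}$, and identifying the contracting locus with something onto which the equivariant argument can be applied, is the technical heart of the Crawley-Boevey--van den Bergh argument and is the step I would expect to consume most of the work.
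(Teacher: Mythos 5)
Your opening move --- reducing via the isomorphism (\ref{cbiso}) to the quiver variety $Y=\widetilde{\mu}^{-1}(\lambda)/PG_{\widetilde{\gamma}}$ --- is exactly what the paper does, but the paper then simply quotes \cite{cbb}, \S 2.4 for purity, whereas you attempt to reprove that result, and the argument you sketch does not get off the ground. The central claim, that the affinization morphism $\pi\colon Y\to Y_{0}:=\widetilde{\mu}^{-1}(0)\sslash PG_{\widetilde{\gamma}}$ is projective and that purity can then be read off from the decomposition theorem over a cone with contracting $\bbg_{m}$-action, is not correct in this situation. Because $t\in\kt_{\gamma}^{0,\gen}$, the parameter $\lambda$ is generic, so every point of $\widetilde{\mu}^{-1}(\lambda)$ is a simple representation of the deformed preprojective algebra (this genericity, not the condition that $\widetilde{\gamma}$ is a positive root, is what gives freeness and smoothness; the root condition gives non-emptiness). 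Hence all $PG_{\widetilde{\gamma}}$-orbits are closed and free, the geometric quotient coincides with the affine GIT quotient $\widetilde{\mu}^{-1}(\lambda)\sslash PG_{\widetilde{\gamma}}$, and $Y$ is itself an affine variety: its affinization map is an isomorphism, and there is no natural morphism from $Y$ to the moment fibre over $0$ at all. Likewise the scaling $\bbg_{m}$-action does not act on $Y$, since it moves $\lambda$, as you observe yourself. The decomposition-theorem-plus-contraction argument you describe is the standard one for Nakajima varieties $\mathfrak{M}_{\theta}(0)\to\mathfrak{M}_{0}(0)$ with a generic stability parameter and complex moment-map parameter $0$ --- a different variety in a different complex structure from the one at hand.

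The two fixes you propose do not close this gap. Hyperk\"ahler rotation only produces a diffeomorphism between the generic-$\lambda$ variety and a $\theta$-stable, $\lambda=0$ variety; a diffeomorphism does not respect algebraic structure, so it cannot transport a statement about mixed Hodge structures. Rescaling $\lambda$ in a $\bbg_{m}$-equivariant one-parameter family of moment fibres is indeed the right kind of idea, but extracting purity of the generic fibre from such a family (whose special fibre is singular, and where one needs a genuine weight argument rather than the mere existence of the family) is precisely the technical content of Crawley-Boevey--van den Bergh's \S 2.4 --- i.e.\ exactly the step you have deferred. So either carry out that equivariant degeneration argument in full, or do what the paper does: after the reduction (\ref{cbiso}), cite \cite{cbb}, \S 2.4 directly.
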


\begin{proof}

This is a corollary of \cite{cbb}, \S 2.4, taking into account the isomorphism (\ref{cbiso}). 

\end{proof}

Now we can prove the first part of theorem \ref{main}.

\begin{thm}\label{pure part}

The pure part of $H^{*}(U_{\gamma}/PG_{\gamma})$ is equal to $H^{*}(\mu^{-1}(\co)/PG_{\gamma})^{W_{\gamma}}$. 

\end{thm}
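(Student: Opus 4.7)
The plan is to reduce the statement to a weight computation on a Leray spectral sequence over the étale cover $\kt^{0,\gen}_{\gamma}\to \kt^{0,\gen}_{\gamma}/W_{\gamma}$. Let
$$
\widetilde{U}_{\gamma}:=(U_{\gamma}/PG_{\gamma})\times_{\kt^{0,\gen}_{\gamma}/W_{\gamma}}\kt^{0,\gen}_{\gamma}.
$$
Since the base change of a finite étale Galois cover is again such a cover, $\widetilde{U}_{\gamma}\to U_{\gamma}/PG_{\gamma}$ is finite étale Galois with group $W_{\gamma}$, so $H^{*}(U_{\gamma}/PG_{\gamma})\cong H^{*}(\widetilde{U}_{\gamma})^{W_{\gamma}}$ as mixed Hodge structures. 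Hence it suffices to show that the pure part of $H^{*}(\widetilde{U}_{\gamma})$ is canonically $H^{*}(\mu^{-1}(\co)/PG_{\gamma})$, with the natural $W_{\gamma}$-action.

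By Lemma \ref{fiberbundle}, the projection $\widetilde{f}:\widetilde{U}_{\gamma}\to \kt^{0,\gen}_{\gamma}$ is a fiber bundle with fiber $F:=\mu^{-1}(\co)/PG_{\gamma}$ and with constant local system $R^{q}\widetilde{f}_{*}\bq=\underline{H^{q}(F)}$. The corresponding Leray spectral sequence
$$
E_{2}^{p,q}=H^{p}(\kt^{0,\gen}_{\gamma})\otimes H^{q}(F)\Longrightarrow H^{p+q}(\widetilde{U}_{\gamma})
$$
is a spectral sequence of mixed Hodge structures (via Arapura's/Saito's formalism, applied to the algebraic smooth morphism $\widetilde{f}$). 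Now observe that $\kt^{0,\gen}_{\gamma}$ is the complement in the vector space $\kt^{0}_{\gamma}$ of a finite arrangement of hyperplanes: the root hyperplanes, together with the $W_{\gamma}$-translates of the hyperplanes $\kt^{0}_{\gamma_{1}}\oplus \kt^{0}_{\gamma_{2}}\subset \kt^{0}_{\gamma}$ (each cut out by the single linear equation expressing that the block $\gamma_{1}$-trace vanishes). By Brieskorn--Orlik--Solomon, $H^{p}(\kt^{0,\gen}_{\gamma})$ is pure of weight $2p$. By Lemma \ref{pure}, $H^{q}(F)$ is pure of weight $q$, so $E_{2}^{p,q}$ is pure of weight $2p+q$.

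Since the differentials of a spectral sequence of mixed Hodge structures strictly preserve the weight filtration, and $d_{r}:E_{r}^{p,q}\to E_{r}^{p+r,q-r+1}$ would shift weight by $r+1>0$, all higher differentials vanish and the spectral sequence degenerates at $E_{2}$. The weight-$n$ piece of $H^{n}(\widetilde{U}_{\gamma})$ is therefore exactly $E_{\infty}^{0,n}=H^{n}(F)$. Taking $W_{\gamma}$-invariants gives the desired equality
$$
\mathrm{Pure}\,H^{n}(U_{\gamma}/PG_{\gamma})= H^{n}(\mu^{-1}(\co)/PG_{\gamma})^{W_{\gamma}}.
$$
The main technical point, which I would treat carefully, is ensuring that the Leray spectral sequence is genuinely one of mixed Hodge structures, so that the weight-separation argument is legal; this is where one needs to invoke Saito's theory (or, alternatively, replace the spectral sequence argument by computing directly with the pure cohomology of $\widetilde{U}_{\gamma}$ as the $E_{\infty}^{0,*}$-term coming from its natural semisimplification).
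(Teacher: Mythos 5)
Your proof is correct and follows essentially the same route as the paper: descend to the finite \'etale Galois cover trivialising $R^i\bar f_*\bq$, run the Leray spectral sequence for the fiber bundle over $\kt^{0,\gen}_\gamma$, and isolate the pure part using Lemma \ref{pure} for the fiber and the known weight behaviour of the hyperplane-arrangement complement $\kt^{0,\gen}_\gamma$. The only (welcome) difference is that you invoke the full purity of weight $2p$ on $H^p(\kt^{0,\gen}_\gamma)$ and deduce degeneration of the Leray spectral sequence by strictness of weights, whereas the paper uses the Hochschild--Serre spectral sequence for $W_\gamma$ and a weaker ``$H^i(\kt^{0,\gen}_\gamma)$ is not pure for $i>0$'' statement, leaving the degeneration of Leray over $\kt^{0,\gen}_\gamma$ implicit; your version makes that step explicit.
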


\begin{proof}

Consider the fiber bundle $\bar{f}: U_{\gamma}/PG_{\gamma}\to \kt^{0,\gen}_{\gamma}/W_{\gamma}$. By lemma \ref{fiberbundle}, the sheaf $R^{i}\bar{f}_{*}\bq$ is constant on the étale neighbourhood $\kt^{0,\gen}_{\gamma}\to \kt^{0,\gen}_{\gamma}/W_{\gamma}$ of $\kt^{0,\gen}_{\gamma}/W_{\gamma}$, so we get the Hochschild-Serre spectral sequence, 
$$
E_{2}^{p,q}=H^{p}(W_{\gamma}, H^{q}(\kt^{0,\gen}_{\gamma},\,R\bar{f}_{*}\bq))\Longrightarrow H^{p+q}(U_{\gamma}/PG_{\gamma}).
$$
Since $W_{\gamma}$ is a finite group, we have $E_{2}^{p,q}=0$ for $p\neq 0$. So the spectral sequence degenerates, and we get
\begin{eqnarray*}
H^{q}(U_{\gamma}/PG_{\gamma})&=&\big(H^{q}(\kt^{0,\gen}_{\gamma},\,R\bar{f}_{*}\bq)\big)^{W_{\gamma}}\\
&=&\Big(\bigoplus_{q_{1}+q_{2}=q}H^{q_{1}}(\kt_{\gamma}^{0,\gen})\otimes H^{q_{2}}(\mu^{-1}(\co)/PG_{\gamma})\Big)^{W_{\gamma}}.
\end{eqnarray*}

Since $\kt_{\gamma}^{0,\gen}$ is the complement of unions of sufficiently many hyperplanes in the vector space $\kt_{\gamma}^{0}$, one proves easily by induction on the number of hyperplanes that the mixed Hodge structure of $H^{i}(\kt_{\gamma}^{0,\gen})$ is not pure if $i\neq 0$. So the pure part of $H^{q}(U_{\gamma}/PG_{\gamma})$ is exactly $H^{q}(\mu^{-1}(\co)/PG_{\gamma})^{W_{\gamma}}$.

\end{proof}

To prove the second result in the main theorem, we need some facts from algebraic stacks. We refer the reader to Olsson-Laszlo \cite{ol1} and Sun \cite{sun} for the proofs. Although they work over the finite fields, their results apply in our situation since the quiver varieties are in fact $\bz$-schemes. The moduli stack $[M_{\gamma}/PG_{\gamma}]$ is a smooth Artin stack over $\bc$ with dimension $d_{\gamma}=$ $\dim(M_{\gamma})-\dim(PG_{\gamma})$. It has dualizing complex $\bq(d_{\gamma})[2d_{\gamma}]$, and we have the Poincaré duality, which is a perfect non-degenerate bilinear pairing
$$ 
H^{i}([M_{\gamma}/PG_{\gamma}])\times H^{2d_{\gamma}-i}_{c}([M_{\gamma}/PG_{\gamma}])\to \bq(d_{\gamma}).
$$
Using the fibration $[M_{\gamma}/PG_{\gamma}]\to [\,\mathrm{pt}/PG_{\gamma}]$, we have $H^{i}([M_{\gamma}/PG_{\gamma}])=H^{i}_{PG_{\gamma}}(\mathrm{pt})$ is pure of weight $i$, the groups $H^{2d_{\gamma}-i}_{c}([M_{\gamma}/PG_{\gamma}])$ are all pure of weight $2d_{\gamma}-i$.

Furthermore, let $Z_{\gamma}=M_{\gamma}\backslash U_{\gamma}$, then $H^{i}_{c}([Z_{\gamma}/PG_{\gamma}])$ is of weight less than or equal to $i$. This is essentially \cite{weil2}. More precisely, as in \cite{bl}, let $\{E_{n}\to B_{n}\}_{n\in \bn}$ be an injective system of finite dimensional $n$-acyclic approximation to the universal $PG_{\gamma}$-torsor $E\to B$, then 
$$
H^{i}_{c}([Z_{\gamma}/PG_{\gamma}])=\lim_{n\to \infty} H^{i+2\dim(E_{n})}_{c}(Z_{\gamma}\times_{PG_{\gamma}}E_{n})(-\dim(E_{n})).
$$
Now it suffices to apply \cite{weil2} to the right hand side.

\begin{proof}[Proof of the second part of theorem \ref{main}]

We have the long exact sequence
\begin{equation}\label{standard 1}
\cdots\to H^{i-1}_{c}([Z_{\gamma}/PG_{\gamma}])\rightarrow H_{c}^{i}([U_{\gamma}/PG_{\gamma}]) \rightarrow H_{c}^{i}([M_{\gamma}/PG_{\gamma}]) \to H^{i}_{c}([Z_{\gamma}/PG_{\gamma}])\to \cdots.
\end{equation}
Since $H^{i-1}_{c}([Z_{\gamma}/PG_{\gamma}])$ is of weight less than or equal to $i-1$, the pure part of $H_{c}^{i}([U_{\gamma}/PG_{\gamma}])$ injects into $H_{c}^{i}([M_{\gamma}/PG_{\gamma}])$. Taking Poincaré duality, we have that $H^{2d_{\gamma}-i}([M_{\gamma}/PG_{\gamma}])$ maps onto the pure part of $H^{2d_{\gamma}-i}([U_{\gamma}/PG_{\gamma}])$. By theorem \ref{pure part}, we have surjective morphism
$$
H^{j}([M_{\gamma}/PG_{\gamma}])\twoheadrightarrow H^{j}(\mu^{-1}(\co)/PG_{\gamma})^{W_{\gamma}}, \quad \forall\, j.
$$
By the definition of $\kt_{\gamma}^{0,\gen}$, we find easily that $U_{\gamma}\subset M_{\gamma}^{st}$. So the above map factorize by
\begin{equation}\label{standard 2}
H^{j}([M_{\gamma}/PG_{\gamma}])\to H^{j}(M_{\gamma}^{st}/PG_{\gamma}) \twoheadrightarrow H^{j}(\mu^{-1}(\co)/PG_{\gamma})^{W_{\gamma}}. 
\end{equation}
By lemma \ref{vanishing}, the first arrow has the same image as its restriction to $V_{\gamma, j+\chi_{Q}(\gamma,\gamma)}^{\prim}$, so we get a surjective morphism
$$
V_{\gamma, j+\chi_{Q}(\gamma,\gamma)}^{\prim}\twoheadrightarrow H^{j}(\mu^{-1}(\co)/PG_{\gamma})^{W_{\gamma}}. 
$$
By the result of \cite{positive} recalled in the introduction, they have the same dimension, so they are isomorphic.

\end{proof}

Similar arguments can be used to show the following variant of the geometric construction.

\begin{thm}\label{stablegenerater}

The restriction
$$H^{*}([M_{\gamma}/PG_{\gamma}])\to H^{*}([M_{\gamma}^{st}/PG_{\gamma}])$$
induces an isomorphism 
$$
V_{\gamma, *+\chi_{Q}(\gamma,\gamma)}^{\prim}\cong \mathrm{P}H^{*}([M_{\gamma}^{st}/PG_{\gamma}])
$$ 
where $\mathrm{P}H^{*}([M_{\gamma}^{st}/PG_{\gamma}])$ is the pure part of $H^{*}([M_{\gamma}^{st}/PG_{\gamma}])$. 

\end{thm}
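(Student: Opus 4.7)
The plan is to run exactly the argument used to prove Theorem~\ref{main}, with $M_\gamma^{st}$ in place of $U_\gamma$ and the closed substack $Z'_\gamma:=M_\gamma\setminus M_\gamma^{st}$ in place of $Z_\gamma$. First, the Deligne-style weight estimate recalled before the proof of Theorem~\ref{main} applies verbatim to $Z'_\gamma$ (using the same $n$-acyclic approximations of the universal $PG_\gamma$-torsor and \cite{weil2}), so $H^{i-1}_{c}([Z'_\gamma/PG_\gamma])$ is of weight $\leq i-1$. From the long exact sequence
$$
\cdots\to H^{i-1}_{c}([Z'_\gamma/PG_\gamma])\to H^{i}_{c}([M_\gamma^{st}/PG_\gamma])\to H^{i}_{c}([M_\gamma/PG_\gamma])\to H^{i}_{c}([Z'_\gamma/PG_\gamma])\to\cdots
$$
one reads off that the pure part of $H^{i}_{c}([M_\gamma^{st}/PG_\gamma])$ injects into $H^{i}_{c}([M_\gamma/PG_\gamma])$. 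Dualising via Poincar\'e duality on the two smooth Artin stacks yields a surjection $H^{j}([M_\gamma/PG_\gamma])\twoheadrightarrow \mathrm{P}H^{j}([M_\gamma^{st}/PG_\gamma])$ for every $j$.

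Next, by Lemma~\ref{vanishing} the decomposable part of $\chh_\gamma$ lies in the kernel of the restriction to $[M_\gamma^{st}/PG_\gamma]$, so the surjection above factors through the primitive subspace, yielding
$$
V_{\gamma,j+\chi_{Q}(\gamma,\gamma)}^{\prim}\twoheadrightarrow \mathrm{P}H^{j}([M_\gamma^{st}/PG_\gamma]).
$$

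To upgrade this to an isomorphism, appeal to Theorem~\ref{main}. Since $U_\gamma\subset M_\gamma^{st}$, the restriction $H^{*}([M_\gamma/PG_\gamma])\to H^{*}(U_\gamma/PG_\gamma)$ factors through $H^{*}([M_\gamma^{st}/PG_\gamma])$. The second arrow in this factorisation is a morphism of mixed Hodge structures and therefore restricts to a map $\mathrm{P}H^{j}([M_\gamma^{st}/PG_\gamma])\to \mathrm{P}H^{j}(U_\gamma/PG_\gamma)=H^{j}(\mu^{-1}(\co)/PG_\gamma)^{W_\gamma}$ (using Theorem~\ref{pure part} for the equality). By Theorem~\ref{main}, the composition
$$
V_{\gamma,j+\chi_{Q}(\gamma,\gamma)}^{\prim}\to \mathrm{P}H^{j}([M_\gamma^{st}/PG_\gamma])\to H^{j}(\mu^{-1}(\co)/PG_\gamma)^{W_\gamma}
$$
is an isomorphism, forcing the first arrow to be injective. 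Together with the surjectivity from the previous step, the theorem follows.

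There is no genuinely new obstacle beyond those already handled in the proof of Theorem~\ref{main}: the weight bound, the long exact sequence, and the Poincar\'e duality argument all carry over to $Z'_\gamma$ without modification, and the primitive-versus-decomposable input is precisely Lemma~\ref{vanishing}. The one point to be mildly careful about is that the restriction $H^{*}([M_\gamma^{st}/PG_\gamma])\to H^{*}(U_\gamma/PG_\gamma)$ indeed sends pure classes to pure classes, which however is automatic for any morphism of mixed Hodge structures.
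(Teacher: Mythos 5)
Your proposal is correct and follows essentially the same route as the paper's own proof: the long exact sequence and weight bound with the pair $(M_{\gamma}^{st},\,M_{\gamma}\setminus M_{\gamma}^{st})$, Poincar\'e duality to get the surjection onto $\mathrm{P}H^{*}([M_{\gamma}^{st}/PG_{\gamma}])$, Lemma~\ref{vanishing} to factor through $V^{\prim}_{\gamma,*+\chi_{Q}(\gamma,\gamma)}$, and Theorem~\ref{main} applied to the factorisation through $U_{\gamma}/PG_{\gamma}$ to force injectivity. The only (immaterial) difference is phrasing: the paper argues via equality of images of the second map in (\ref{standard 2}) restricted to the pure part, while you invoke directly that a morphism of mixed Hodge structures preserves pure parts.
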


\begin{proof}
The proof is almost the same as that of the main theorem, we indicate only the differences. Using an exact sequence as (\ref{standard 1}), with the pair ($U_{\gamma}$, $Z_{\gamma}$) replaced by ($M_{\gamma}^{st}$, $M_{\gamma}\backslash M_{\gamma}^{st}$), we can show that the restriction $H^{*}([M_{\gamma}/PG_{\gamma}])\to H^{*}([M_{\gamma}^{st}/PG_{\gamma}])$ factors through and is surjective onto $\mathrm{P}H^{*}([M_{\gamma}^{st}/PG_{\gamma}])$.
Again by lemma \ref{vanishing}, we get the surjection
$$
V_{\gamma, *+\chi_{Q}(\gamma,\gamma)}^{\prim}\twoheadrightarrow \mathrm{P}H^{*}([M_{\gamma}^{st}/PG_{\gamma}]).
$$
Now observe that the second morphism in the factorisation (\ref{standard 2}) has the same image as that of 
$$
\mathrm{P}H^{j}([M_{\gamma}^{st}/PG_{\gamma}])\rightarrow H^{j}(\mu^{-1}(\co)/PG_{\gamma})^{W_{\gamma}},
$$
since the morphism preserves the weights of the cohomological groups and $H^{j}(\mu^{-1}(\co)/PG_{\gamma})^{W_{\gamma}}$ is pure of weight $j$. So the factorisation (\ref{standard 2}) becomes
\begin{equation}\label{standard 3}
V_{\gamma, j+\chi_{Q}(\gamma,\gamma)}^{\prim}\twoheadrightarrow \mathrm{P}H^{j}([M_{\gamma}^{st}/PG_{\gamma}])\twoheadrightarrow H^{j}(\mu^{-1}(\co)/PG_{\gamma})^{W_{\gamma}}.
\end{equation}
Now that the composition is an isomorphism by our main theorem, all the arrows in (\ref{standard 3}) are isomorphisms.  

\end{proof}

\section*{Acknowledgements} We are very grateful to Tam\'as Hausel for having explained to us his conjecture, and to Ben Davison for pointing out a bug in the proof. We also want to thank an anonymous referee for his careful readings and helpful suggestions.

\end{document}